\newcommand{\ol}{\overline}
\newcommand{\al}{\alpha}
\newcommand{\bt}{\beta}
\newcommand{\dt}{\delta}
\newcommand{\gm}{\gamma}
\newcommand{\vf}{\varphi}
\newcommand{\Om}{\Omega}
\newcommand{\Dt}{\Delta}
\newcommand{\Tht}{\Theta}
\newcommand{\const}{\mathop{\rm Const}}
\newcommand{\Id}{\mathop{\rm Id}}
\newcommand{\tm}{\times}
\newcommand{\sbs}{\subset}
\newcommand{\wdt}{\widetilde}
\newcommand{\iy}{\infty}
\newcommand{\bR}{\mathbb{R}}
\newcommand{\bZ}{\mathbb{Z}}
\newcommand{\bT}{\mathbb{T}}
\newcommand{\bC}{\mathbb{C}}
\newcommand{\bQ}{\mathbb{Q}}
\newcommand{\bD}{\mathbb{D}}
\newcommand{\calM}{\mathcal{M}}
\newcommand{\calU}{\mathcal{U}}
\newcommand{\calP}{\mathcal{P}}
\newcommand{\mbP}{\mathbf{P}}
\newcommand{\mba}{\mathbf{a}}
\newcommand{\mbU}{\mathbf{U}}
\newcommand{\mbo}{\mathbf{0}}
\newcommand{\mbl}{\mathbf{1}}
\newtheorem{theorem}{\bf  Theorem}
\newtheorem{lemma}{\bf  Lemma}
\newtheorem{corollary}{\bf \sc Corollary}
\begin{document}


\begin{center}
An approximation of Daubechies wavelet matrices by perfect
reconstruction filter banks with rational coefficients
\\[5mm]

 L.~Ephremidze, A.~Gamkrelidze
  and  E.~Lagvilava
        \end{center}

\vskip+0.5cm

 \noindent {\small {\bf Abstract.} It is described how the coefficients of Daubechies
 wavelet matrices can be approximated by rational numbers in such a way that the
 perfect reconstruction property of the filter bank be preserved exactly.}

 \vskip+0.2cm\noindent  {\small {\bf Keywords:} Daubechies wavelets,
paraunitary matrix polynomials}

\vskip+0.2cm \noindent  {\small {\bf  AMS subject classification
(2010):} 42C40}

\vskip+0.5cm

\section{Introduction} Daubechies wavelet matrices $D_N$ are
perfect reconstruction orthogonal filter banks to which there
correspond the orthonormal bases of compactly supported wavelet
functions $\{2^{-\frac i2}\psi(2^ix-j)\}_{i,j\in\mathbb{Z}}$.
However, in most of practical applications of these wavelets, what
matters is the coefficients of $D_N$ and not the form of the
corresponding function $\psi$. In mere approximation of the
irrational coefficients of $D_N$ by rational numbers, which is
desirable in  order to simplify the related calculations on a
digital computer, the perfect reconstruction property of the
filter bank $D_N$ (which is its most important property) is not
preserved in general. Obtained in this way $\hat{D}_N\approx D_N$
is a perfect reconstruction orthogonal filter bank only
approximately.  In the present paper, we describe a procedure of
approximation of Daubechies wavelet matrices $D_N$ by filter banks
$\hat{D}_N$ with rational coefficients which have the perfect
reconstruction property exactly. This approach depends on a recent
parametrization of compact wavelet matrices \cite{Lag1} which was
developed in parallel with a new matrix spectral factorization
method \cite{JanLagSt}, \cite{IEEE}.\footnote{This method is
currently patent pending.}

The paper is organized as follows. The necessary notation and
definitions are introduced in the next section. In Section III, an
exact formulation  of the problem solved is given. In Section IV,
the mathematical background of the proposed method is provided and
the method itself is described in Section V. Some results of
numerical simulations are presented in Section VI.

\section{Notation and Basic Definitions}

The sets of integer, rational, real and complex numbers are
denoted by $\bZ$, $\bQ$, $\bR$, and $\bC$, respectively.
$\bD:=\{z\in\bC:|z|<1\}$ and $\bT:=\partial\bD$. $\delta_{ij}$
stands for the  Kronecker delta, $\dt_{ij}=1$ if $i=j$ and $0$
otherwise, and $\Id_X$ is the identity map on a set $X$.

Let $\calM_F(m\tm N)$ be the set of $m\tm N$ matrices with entries
from a field $F$ (if $F$ is omitted it is always assumed that
$F=\bC$). A row of coefficients $(c_0,c_1,\ldots,c_{N-1})$ will be
sometimes called a {\em filter} and an $m\tm N$ matrix will be
called an $m${\em-channel filter bank} (with $N$ {\em taps}).

For $M\in \calM(m\tm N)$, let $\ol{M}$ be the matrix with
conjugate entries and $M^*=\ol{M}^T$.

$U\in \calM(m\tm m)$ is called unitary if $UU^*=U^*U=I_m$ where
$I_m$ stands for the $m\tm m$ identity matrix, and the set of
unitary matrices is denoted by $\calU(m)$.

$\calP[F]$ denotes the set of Laurent polynomials with
coefficients from a field $F$, and $\calP_N[F]:=\{\sum_{k=-N}^N
c_kz^k:c_k\in F, k=-N,\ldots,N\}$. If we write just $\calP$, the
field of coefficients will be clear from the context, in most
cases $\calP=\calP[\bC]$. \; $\calP^+\sbs\calP$ is the set of
polynomials (with non-negative powers of $z$, $\sum_{k=0}^N
c_kz^k\in \calP^+$) and $\calP^-\sbs\calP$ is the set of Laurent
polynomials with negative powers of $z$, $\sum_{k=1}^N
c_kz^{-k}\in \calP^-$. We emphasize that constant functions belong
only to $\calP^+$ so that $\calP^+\cap \calP^-=\emptyset$. Let
also $\calP^\pm_N=\calP^\pm\cap\calP_N$.

For power series $f(z)=\sum_{k=-\iy}^\iy c_k z^k$ and $N\geq 1$,
let $[f(z)]^-$, $[f(z)]^+$, $[f(z)]^-_N$, and $[f(z)]^+_N$,
denote, respectively, $\sum_{k=-\iy}^{-1} c_k z^k$,
$\sum_{k=0}^\iy c_k z^k$, $\sum_{k=-N}^{-1} c_k z^k$, and
$\sum_{k=0}^N c_k z^k$ and we assume the corresponding functions
under these expressions if the convergence domains of these power
series are known.

$\calP(m\tm n)$ denotes the set of $m\tm n$ (polynomial) matrices
with entries from $\calP$, and  the sets $\calP^+(m\tm n)$,
$\calP^-_N[F](m\tm n)$, etc. are defined similarly. The elements
of these sets $\mbP(z)=[p_{ij}(z)]$ are called polynomial matrix
functions. When we speak about continuous maps between these sets,
we mean that they are equipped with a usual topology.

For $p(z)=\sum_{k=-N}^N c_kz^k \in\calP$, let
$\wdt{p}(z)=\sum_{k=-N}^N \ol{c}_kz^{-k}$ and for
$P(z)=[p_{ij}(z)]\in \calP(m\tm n)$ let
$\wdt{P}(z)=[\wdt{p}_{ij}(z)]^T\in\calP(n\tm m)$.  Note that
$\wdt{P}(z)=(P(z))^*$ when $z\in\bT$. Thus usual relations for
adjoint matrices like
$\wdt{P_1+P_2}(z)=\wdt{P}_1(z)+\wdt{P}_2(z)$,
$\wdt{P_1P_2}(z)=\wdt{P}_2(z)\wdt{P}_1(z)$, etc. hold.

A polynomial matrix function $\mbU(z)\in \calP(m\tm m)$ is called
{\em paraunitary} if $\mbU(z)\wdt{\mbU}(z)=I_m$ for each
$z\in\bC\backslash\{0\}$, and the set of all paraunitary
polynomial matrices is denoted by $\calP\calU(m)$. Note that if
$\mbU\in \calP\calU(m)$, then $\mbU(z)\in \calU(m)$ for each
$z\in\bT$.

An $S\in\calP_N(m\tm m)$ is called positive definite if $S(z)$ is
such ($XS(z)X^*>0$ for each $0\not=X\in\calM(1\tm m)$) for almost
every $z\in\bT$. The polynomial matrix spectral factorization
theorem (see e.g \cite{Uniq}, \cite{poly}) asserts that every
positive definite $S\in\calP_N(m\tm m)$ can be factorized as
\begin{equation}
 S(z)=S_+(z)\wdt{S_+}(z),\;\;\;z\in\bC\backslash \{0\},
 \label{PSF}
 \end{equation}
where $S_+\in\calP^+_N(m\tm m)$ and $\det S_+(z)\not=0$ for each
$z\in \bD$. The representation (\ref{PSF}) is unique in a sense
that if $ S(z)=S_+'(z)\wdt{S_+'}(z)$, then there exists
$U\in\calU(m)$ such that $S_+(z)=S_+'(z)U$.

 A matrix $A\in \calM_F\big(m\tm mN\big)$,
\begin{equation}
 \label{wm}
A=\left(\begin{matrix}\mba^0\\ \mba^1\\\vdots\\ \mba^{m-1}
\end{matrix}\right)=
\left(\begin{matrix} a^0_{0}&a^0_{1}&\cdots&a^0_{mN-1}\\[1mm]
a^1_{0}&a^1_{1}&\cdots&a^1_{mN-1}\\
\vdots&\vdots&\vdots&\vdots\\
a^{m-1}_{0}&a^{m-1}_{1}&\cdots&a^{m-1}_{mN-1}\\
\end{matrix}\right),
 \end{equation}
is said to be a {\em wavelet matrix} of {\em rank} $m$ and {\em
genus} $N$, $A\in WM(m,N,F)$ (see \cite[p. 41]{RW}) if the shifted
versions of the rows of $A$ by arbitrary multiples of $m$ form an
orthogonal set, that is
\begin{equation}
 \label{qc}
\sum_{k=0}^{mN-1}a^i_{k+mr}\ol{a^j_{k+ms}}=m\dt_{ij}\dt_{rs},\;\;\;
r,s\in\bZ,\;\;i,j\in\{0,1,\ldots,m-1\}
 \end{equation}
(it is assumed that $a^i_k=0$ whenever $k<0$ or $k\geq mN$) and
\begin{equation}
 \label{lc}
\sum_{k=0}^{mN-1}a^i_{k}=m\dt_{i0}, \;\;\;0\leq i<m.
 \end{equation}
The conditions (\ref{qc}) and (\ref{lc}) are referred to as the
{\em quadratic} and {\em linear} conditions, respectively,
defining a wavelet matrix.

Wavelet matrices with genus 1 (and rank $m$) are called {\em Haar
wavelet matrices}, $H(m,F):=WM(m,1,F)$. It can be shown that the
first row of any $H\in H(m,F)$ consists of just 1s (see
\cite[Lemma 4.4.2]{RW}) and if $H_1,H_2\in H(m,\bC)$, then
$H_1=\left(\begin{matrix}1&0\\0&U\end{matrix}\right)H_2$ where
$U\in \calU(m-1)$ (see \cite[Corollary 4.4.3]{RW}). Consequently,
the only real Haar wavelet matrix of rank 2 with determinant 1 is
\begin{equation}
 \label{H2}
H_2=\begin{pmatrix} 1&1\\
-1&1\end{pmatrix}.
 \end{equation}

For the matrix (\ref{wm}), we consider the Fourier series (the
$z$-transform) of its rows
\begin{equation}
 \label{sf}
h_s(z)=\sum_{k=0}^{mN-1}a^s_k z^k,\;\;\;\;\;s=0,1,\ldots,m-1,
 \end{equation}
 and the corresponding {\em polyphase matrix polynomial} $A(z)\in\calP^+_F(m\tm m)$ defined by
 \begin{equation}
 \label{LM}
 A(z)=\sum_{k=0}^{N-1}A_k z^k
 \end{equation}
where $A_k=[a^i_{km+j}]\in \calM_F(m\tm m)$, $k=0,1,\ldots,N-1$,
e.g. $
A_0=\left(\begin{matrix} a^0_{0}&\cdots&a^0_{m-1}\\
\vdots&\vdots&\vdots\\
a^{m-1}_{0}&\cdots&a^{m-1}_{m-1}\\
\end{matrix}\right)
$.  We will heavily use the fact that the quadratic condition
(\ref{qc}) is equivalent to the condition on (\ref{LM}),
\begin{equation}
 \label{Pc}
 A(z)\wdt{A}(z)=mI_m,
 \end{equation}
i.e. $A(z)$ is a constant multiplier of paraunitary matrix
function. This equivalence can be checked by direct computations
(see \cite[p. 43]{RW}). Consequently, $A(1)$ is always a Haar
wavelet matrix (see \cite[p. 49]{RW}).

It is  well known as well that the quadratic condition (\ref{qc})
is also equivalent to the following condition on (\ref{sf}) (see
\cite[p. 96]{RW})
\begin{equation}
 \label{sf1}
 \sum_{k=0}^{m-1}h_r(zz_0^k)\wdt{h_{s}}(zz_0^k)=m^2\dt_{rs}.
 \end{equation}

In signal processing applications, if we split a function (signal)
$f:\bZ\to\bC$ into $m$ parts
\begin{equation}
 \label{prp}
f_r=\sum_{s=-\iy}^\iy \frac1m \langle f,\mba^r_{sm}\rangle
\mba^r_{sm},\;\;\;r=0,1,\ldots,m-1,
 \end{equation}
where $\mba^r_{s}(\cdot)=\mba^r(\cdot-s)$, $s\in\bZ$, (see
(\ref{wm})) and $\langle f,\mba^r_{s}\rangle=\sum_{k=-\iy}^\iy
f(k)\ol{\mba}^r_{s}(k)$, (it is assumed that $\mba^r(k)=0$
whenever $k$ is outside the range $\{0,1,\ldots,mN-1\}$, so that
only finitely many products in the above sums differ from $0$)
which corresponds to the filtering by each  of the rows $\mba^r$,
$r=0,1,\ldots,m-1$, followed by downsampling with rate $m$, then
 each of the equivalent conditions (\ref{qc}), (\ref{Pc}), and (\ref{sf1})
 guarantees that $f$ can be reconstructed exactly as follows
 (see \cite[Theorem 4.4.23]{RW})
\begin{equation}
 \label{prp1}
f_r=\sum_{r=0}^{m-1} f_r.
 \end{equation}
For this reason, a wavelet matrix is a {\em perfect
reconstruction} filter bank.

The linear condition (\ref{lc}) implies that a constant signal
$f:\bZ\to\mathbb{C}$ emerges from the first filter in the
representation (\ref{prp1}).

It is said that a wavelet matrix (\ref{wm}) has a {\em
polynomial-regularity degree} $d$ if
\begin{equation}
 \label{polrp}
\sum_{k=0}^{mN-1}k^p a^i_k=0,
\;\;p=0,1,\ldots,d,\;\;i=1,2,\ldots,m-1.
 \end{equation}
 The higher this degree, the more zero coefficients appear in the
 representation (\ref{prp}) of a smooth signal $f$. Note that every wavelet
 matrix has a polynomial regularity degree equal at least to $0$.

\section{Formulation of the problem}

The Daubechies wavelet matrix $D_N$ (with $2N$ taps) is the
two-channel filter bank with real coefficients
\begin{equation}
 \label{Dwm}
D_N=\begin{pmatrix} h_0\\h_1\end{pmatrix}=
\begin{pmatrix} a_0&b_0&a_1&b_1&\ldots&a_{N-1}&b_{N-1}\\
-b_{N-1}&a_{N-1}&-b_{N-2}&a_{N-2}&\ldots&-b_0&a_0
\end{pmatrix}
\end{equation}
which together with quadratic and linear  conditions (cf.
(\ref{sf1}) and (\ref{lc}))
\begin{equation}
 \label{Dqc}
|h_0(z)|^2+|h_0(-z)|^2=4\;\;\text{ when } |z|=1
 \end{equation}
and
\begin{equation}
 \label{Dlc}
h_0(1)=2;\;\;\;h_1(1)=0
 \end{equation}
where $h_j(z)=\sum_{k=0}^{2N-1}h_j[k]z^k$, $j=0,1$, has the
polynomial-regularity degree $N-1$ (\ref{polrp})

\begin{equation}
 \label{Dpolrp}
\sum_{k=0}^{2N-1}h_1[k]\cdot k^p=0 \text{ for }p=0,1,\ldots,N-1.
 \end{equation}
The way of construction of such matrices $D_N$ (computation of
coefficients in (\ref{Dwm})) was first established by Daubechies
\cite{Dau} and  is described in most books on wavelets
\cite{DauB}, \cite{Mallat}, \cite{RW}. To each matrix $D_N$ there
corresponds the Daubechies wavelet $\psi=\psi_N$ which is a
supported in $[-N+1,N]$ continuous function of certain smoothness
(depending on $N$) such that the system $\{2^{-\frac
i2}\psi(2^ix-j)\}$, ${i,j\in\mathbb{Z}}$, forms an orthonormal
basis of $L_2(\bR)$. However the forms of wavelet functions
$\psi_N$ are mostly of theoretical interest, while the numerical
values of the coefficients of $D_N$ are very important for
applications. Since they are irrational numbers in general, during
the actual calculations on digital computers, these coefficients
are quantized and thus $D_N$ is approximated by $\hat{D}_N$. It
may then happen that $\hat{D}_N$ satisfies the quadratic condition
(\ref{Dqc}) only approximately. As it has been explained in the
preceding section, the quadratic condition on a wavelet matrix
determines the perfect reconstruction property of a filter bank.

In the present paper, we propose a method of approximation of
$D_N$ by $\hat{D}_N$ which has rational coefficients and satisfies
the quadratic and linear conditions (\ref{Dqc}) and (\ref{Dlc})
exactly. It is obvious that $\hat{D}_N$ will have the maximal
polynomial regularity property (\ref{Dpolrp}) only approximately.

 \section{Mathematical Background of the Method}

 The following theorem, which plays a crucial role in the
 established method, was actually proved in \cite{JanLagSt}. We
 present here the simplified proof of this theorem.

 \begin{theorem}
Let $N\geq 1$. For any $0\not\equiv\vf\in\calP^-_N$, there exists
a unique pair of functions $\al,\bt\in \calP^+_N$ such that
\begin{gather}
 \label{Zt}
 \al(z)\wdt{\al}(z)+\bt(z)\wdt{\bt}(z)=1,\;\;\;z\in\bC\backslash \{0\}\\
 \label{Zt1} \al(1)=1;\;\;\;\bt(1)=0,
 \end{gather}
and
\begin{equation}
 \label{Zmt}
\begin{pmatrix} 1&0\\ \vf&1\end{pmatrix}
\begin{pmatrix} \al&\bt\\ -\wdt{\bt}&\wdt{\al}\end{pmatrix}\in\calP^+(2\tm
2).
 \end{equation}
Moreover, $\al$ and $\bt$ satisfy the condition
 \begin{equation}
 \label{0shi}
|\al(0)|+|\bt(0)|>0.
 \end{equation}
 \end{theorem}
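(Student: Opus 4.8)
The plan is to recognize the statement as a disguised matrix spectral factorization. Write $L(z)=\begin{pmatrix}1&0\\ \vf&1\end{pmatrix}$, and for a candidate pair $\al,\bt\in\calP^+_N$ set $\mbU=\begin{pmatrix}\al&\bt\\ -\wdt{\bt}&\wdt{\al}\end{pmatrix}$. A direct evaluation of $\mbU\wdt{\mbU}$ shows that condition (\ref{Zt}) is exactly the assertion that $\mbU$ is paraunitary, and for this anti-diagonal form paraunitarity automatically forces $\det\mbU=\al\wdt{\al}+\bt\wdt{\bt}=1$. Since then $\mbU\wdt{\mbU}=I_2$, the product $P:=L\mbU$ satisfies $P\wdt{P}=L\wdt{L}=:S$, while condition (\ref{Zmt}) says precisely that $P\in\calP^+(2\tm2)$. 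Thus the theorem reduces to producing, for the fixed $S=L\wdt{L}$, a causal factor $P=L\mbU$ with $\det P=1$ meeting the normalization (\ref{Zt1}).

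First I would check that $S$ is eligible for the spectral factorization theorem (\ref{PSF}). One computes $S=\begin{pmatrix}1&\wdt{\vf}\\ \vf&1+\vf\wdt{\vf}\end{pmatrix}$, whose entries lie in $\calP_N$, so $S\in\calP_N(2\tm2)$; and for $z\in\bT$ one has $\wdt{L}(z)=L(z)^*$ with $\det L(z)=1$, so $S(z)=L(z)L(z)^*$ is positive definite. Hence (\ref{PSF}) yields $S=S_+\wdt{S_+}$ with $S_+\in\calP^+_N(2\tm2)$, $\det S_+(z)\neq0$ on $\bD$, unique up to a right unitary constant. I would then set $\mbU_0:=L^{-1}S_+$, which is paraunitary since $\mbU_0\wdt{\mbU_0}=L^{-1}S\,\wdt{L}^{-1}=I_2$. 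The scalar $\det S_+$ satisfies $\det S_+\cdot\wdt{\det S_+}=\det S=1$ and is zero-free on $\bD$, which forces it to be a unimodular constant; consequently $\det\mbU_0$ is a constant.

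To obtain existence I would normalize: put $\mbU:=\mbU_0\,\mbU_0(1)^{-1}$. As $\mbU_0(1)\in\calU(2)$, the factor $\mbU_0(1)^{-1}$ is a unitary constant, so $\mbU$ is still paraunitary with $L\mbU\in\calP^+(2\tm2)$, and $\det\mbU=\det\mbU_0\cdot\det\mbU_0(1)^{-1}=1$ because $\det\mbU_0$ is constant. The identity $\det\mbU=1$ places $\mbU$ in the required form $\begin{pmatrix}\al&\bt\\ -\wdt{\bt}&\wdt{\al}\end{pmatrix}$, and since the first row of $L^{-1}=\begin{pmatrix}1&0\\ -\vf&1\end{pmatrix}$ is $(1,0)$, the first row of $\mbU$ equals that of $S_+\mbU_0(1)^{-1}\in\calP^+_N(2\tm2)$, giving $\al,\bt\in\calP^+_N$; finally $\mbU(1)=I_2$ is exactly (\ref{Zt1}). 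For uniqueness, if $(\al,\bt)$ and $(\al',\bt')$ both satisfy the conclusions, then $P=L\mbU$ and $P'=L\mbU'$ are elements of $\calP^+_N(2\tm2)$ with $P\wdt{P}=P'\wdt{P'}=S$ and determinant $1$, zero-free on $\bD$; uniqueness in (\ref{PSF}) gives $P'=PW$ with $W\in\calU(2)$, hence $\mbU'=\mbU W$. Comparing determinants forces $\det W=1$, and comparing first rows at $z=1$ gives $(1,0)=(1,0)W$, so $W$ is unitary with first row $(1,0)$ and determinant $1$, i.e. $W=I_2$ and $\mbU'=\mbU$. The extra condition (\ref{0shi}) is then immediate: $\det P\equiv1$ gives $\det P(0)=1\neq0$, so the first row $(\al(0),\bt(0))$ of $P(0)$ cannot vanish.

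The only substantive input is the matrix spectral factorization theorem (\ref{PSF}); granting it, the argument is elementary. I expect the main place requiring care to be the bookkeeping in the third step, namely verifying that $S$ is positive definite of degree $N$ and tracking the determinants and the residual unitary freedom used to impose $\det\mbU=1$ and (\ref{Zt1}) — sign and indexing slips are the realistic hazard there, rather than any genuine conceptual obstacle.
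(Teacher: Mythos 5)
Your proof is correct, and its overall strategy is sound: the reduction of (\ref{Zt})--(\ref{Zmt}) to finding a suitably normalized spectral factor of $S=L\wdt{L}$ works, your argument that $\det S_+$ must be a unimodular constant (zero-free on $\bD$ plus $\det S_+\cdot\wdt{\det S_+}=1$) is right, and the uniqueness and (\ref{0shi}) steps go through. However, for the \emph{existence} half you take a genuinely different route from the paper. The paper invokes spectral factorization only for uniqueness (observing, exactly as you do, that $\begin{pmatrix}1&0\\ \vf&1\end{pmatrix}\begin{pmatrix}\al&\bt\\ -\wdt{\bt}&\wdt{\al}\end{pmatrix}$ is the spectral factor of $L\wdt{L}$); for existence it gives a self-contained constructive proof: writing $\al=\sum_{k=0}^N x_kz^k$, $\bt=\sum_{k=0}^N y_kz^k$ and forcing the negative-power coefficients of $\vf\al-\wdt{\bt}$ and $\vf\bt+\wdt{\al}$ to vanish (with the zeroth coefficients set to $0$ and $1$) yields the system (\ref{Bsyst}) with a Hankel matrix $\Tht$ built from the coefficients of $\vf$, which collapses to the manifestly nonsingular equation $(\ol{\Tht}\Tht+I_{N+1})X=\mbl$; Lemma 1 then gives $\al\wdt{\al}+\bt\wdt{\bt}=\const$, and normalization plus right multiplication by $U^{-1}$ from (\ref{U}) yields (\ref{Zt}) and (\ref{Zt1}). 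This difference is not cosmetic for this particular paper: from the explicit linear system, Corollary 1 (continuity of $\coprod$) and especially Corollary 2 (rational $\vf$ produces rational $\al,\bt$) are immediate, because the solution of a nonsingular linear system with rational data is rational and depends continuously on the data. Your black-box appeal to the matrix spectral factorization theorem (\ref{PSF}) buys brevity and conceptual clarity, but it does not by itself deliver the rationality statement (a spectral factor of a rational-coefficient $S$ is not obviously rational without rerunning some constructive argument), and that rationality is precisely what the paper's approximation method rests on.
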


 \begin{lemma}
Let $(\ref{Zmt})$ be satisfied for $\vf\in\calP^-_N$ and
$\al,\bt\in \calP^+_N$.  Then
\begin{equation}
 \label{Const}
\al(z)\wdt{\al}(z)+\bt(z)\wdt{\bt}(z)=\const,\;\;\;z\in\bC\backslash
\{0\}
 \end{equation}
 \end{lemma}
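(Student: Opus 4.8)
The plan is to read off (\ref{Const}) from the determinant of the matrix product in (\ref{Zmt}). Write
\[
L=\begin{pmatrix} 1&0\\ \vf&1\end{pmatrix},\qquad U=\begin{pmatrix} \al&\bt\\ -\wdt{\bt}&\wdt{\al}\end{pmatrix},
\]
and set $f:=\al\wdt{\al}+\bt\wdt{\bt}$, which is exactly the left-hand side of (\ref{Const}). First I would compute $\det L=1$ and $\det U=\al\wdt{\al}-\bt\bigl(-\wdt{\bt}\bigr)=f$, so that the product $M:=LU$ from (\ref{Zmt}) satisfies $\det M=\det L\cdot\det U=f$. The same is visible by a direct expansion: in
\[
M=\begin{pmatrix} \al&\bt\\ \vf\al-\wdt{\bt}&\vf\bt+\wdt{\al}\end{pmatrix}
\]
the two terms carrying the factor $\vf$ cancel, leaving $\det M=\al\wdt{\al}+\bt\wdt{\bt}=f$.

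Next I would invoke the hypothesis $M\in\calP^+(2\tm2)$. Since $\calP^+$ is a ring, closed under sums and products, the determinant $\det M$ is again an element of $\calP^+$; hence $f\in\calP^+$, i.e.\ $f$ contains no strictly negative powers of $z$. On the other hand $f$ is self-adjoint: using $\wdt{P_1P_2}=\wdt{P}_2\wdt{P}_1$ and $\wdt{\wdt{P}}=P$ one gets $\wdt{f}=\wdt{\al\wdt{\al}}+\wdt{\bt\wdt{\bt}}=\al\wdt{\al}+\bt\wdt{\bt}=f$, and plainly $f\in\calP_N$. Writing $f(z)=\sum_{k=-N}^{N}d_kz^k$, self-adjointness means $d_{-k}=\ol{d_k}$ for every $k$. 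Combined with $f\in\calP^+$, which forces $d_k=0$ for all $k<0$, this yields $d_k=\ol{d_{-k}}=0$ for all $k>0$ as well; thus $f\equiv d_0$ is constant, which is precisely (\ref{Const}).

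I expect the only real step to be the recognition that passing to determinants is the right move; after that, the cancellation of $\vf$ and the elementary fact that a self-adjoint Laurent polynomial with no negative powers reduces to its constant term are routine. It is worth noting that this argument uses only (\ref{Zmt}) and neither the normalization (\ref{Zt1}) nor the nonvanishing (\ref{0shi}); these will be needed later to pin down the value of the constant (namely $1$, obtained by evaluating $f$ at $z=1$ via (\ref{Zt1})) and the uniqueness in the theorem, but not for the constancy asserted in the lemma.
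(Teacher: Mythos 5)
Your proof is correct and is essentially the paper's own argument in determinant clothing: the paper introduces $\Phi_1=\vf\al-\wdt{\bt}$ and $\Phi_2=\vf\bt+\wdt{\al}$ (the second-row entries of the product in (\ref{Zmt})) and forms the combination $\Phi_2\al-\Phi_1\bt$, which is precisely your $\det M$, concluding as you do that it lies in $\calP^+$, is self-adjoint, and hence is constant. No gaps; your closing remark that neither (\ref{Zt1}) nor (\ref{0shi}) is needed here is also consistent with the paper.
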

 Note that this constant should be positive (for
 $\al,\bt\not\equiv0$) since
 $\al(z)\wdt{\al}(z)+\bt(z)\wdt{\bt}(z)=|\al(z)|^2+|\bt(z)|^2$ for each
 $z\in\bT$.
\begin{proof}
It follows from (\ref{Zmt}) that
\begin{equation}
 \label{syst}
\begin{cases}
\vf\al-\wdt{\bt}=:\Phi_1\in\calP^+\\
\vf\bt+\wdt{\al}=:\Phi_2\in\calP^+.
\end{cases}
 \end{equation}
Hence
$$
\al\wdt{\al}+\bt\wdt{\bt}=\Phi_2\al-\Phi_1\bt=:\Phi\in\calP^+
$$
and since $\Phi=\wdt{\Phi}$ it follows that $\Phi$ is constant.
\end{proof}

{\em Proof of Theorem 1}. Let $\vf(z)=\sum_{k=1}^N \gm^kz^{-k}$ be
given. We provide a constructive proof how to find $\al$ and
$\bt$. First we seek for  nontrivial polynomials
\begin{equation}
 \label{albt}
\al(z)=\sum_{k=0}^Nx_kz^k;\;\;\bt(z)=\sum_{k=0}^Ny_kz^k
 \end{equation}
which satisfy (\ref{syst}) and hence (\ref{Zmt}). If we equate all
coefficients of negative powers of $z$ of functions $\Phi_1$ and
$\Phi_2$ in (\ref{syst}) to $0$ and their $0$th coefficients to
$0$ and $1$ respectively, then we get the following system of
equations in the block matrix form
\begin{equation}
 \label{Bsyst}
\begin{cases}
\Tht X-\ol{Y}=\mbo\\
\Tht Y+\ol{X}=\mbl
\end{cases}
\end{equation}
where
$$
\Tht=\begin{pmatrix}0&\gm_{1}&\gm_{2}&\cdots&\gm_{N-1}&\gm_{N}\\
        \gm_{1}&\gm_{2}&\gm_{3}&\cdots&\gm_{N}&0\\
        \gm_{2}&\gm_{3}&\gm_{4}&\cdots&0&0\\
        \cdot&\cdot&\cdot&\cdots&\cdot&\cdot\\
        \gm_{N}&0&0&\cdots&0&0\end{pmatrix},\;
        X=\begin{pmatrix}x_0\\ x_1\\ x_2\\
        \vdots\\x_N\end{pmatrix},\;
        Y= \begin{pmatrix}y_0\\ y_1\\ y_2\\
        \vdots\\y_N\end{pmatrix},\;
        \mbo=\begin{pmatrix}0\\ 0\\0\\ \vdots\\0\end{pmatrix},\;
        \mbl=\begin{pmatrix}1\\ 0\\0\\ \vdots\\0\end{pmatrix}
$$
If we substitute the first equation of (\ref{Bsyst})
\begin{equation}
 \label{Y}
Y=\ol{\Tht}\ol{X}
 \end{equation}
into the second equation, we get
$\Tht\ol{\Tht}\,\ol{X}+\ol{X}=\mbl$, which is equivalent to
\begin{equation}
 \label{dstr}
(\ol{\Tht}\Tht+I_{N+1}){X}=\mbl.
 \end{equation}
The system (\ref{dstr}) is nonsingular as $\Tht$ is symmetric and
$\ol{\Tht}\Tht=\Tht^*\Tht$ is positive definite. (Furthermore, all
eigenvalues of $\Dt:=\ol{\Tht}\Tht+I_{N+1}$ are grater than or
equal to $1$, and hence $\|\Dt^{-1}\|\leq 1$ as well.) Hence, the
coefficients $x_k$ and $y_k$, $k=0,1,\ldots,N$, in (\ref{albt})
can be determined from (\ref{dstr}) and (\ref{Y}), and the
constructed $\al$ and $\bt$ will satisfy (\ref{Zmt}). The equation
(\ref{Const}) will be accomplished by Lemma 1 and we can achieve
(\ref{Zt}) by normalization.

If now the unitary matrix
\begin{equation}
 \label{U}
U=\begin{pmatrix} \al(1)&\bt(1)\\
-\wdt{\bt}(1)&\wdt{\al}(1)\end{pmatrix}
 \end{equation}
is not the identity matrix  (note that $\det U=1$ by virtue of
(\ref{Zt})), we can redefine $\al$ and $\bt$ by the equation
$$
\begin{pmatrix} \al&\bt\\ -\wdt{\bt}&\wdt{\al}\end{pmatrix}:=
\begin{pmatrix} \al&\bt\\ -\wdt{\bt}&\wdt{\al}\end{pmatrix}\cdot U^{-1}
$$
and thus the determined $\al$ and $\bt$ will satisfy the
conditions (\ref{Zt})-(\ref{Zmt}).

Since the determinant of the product in (\ref{Zmt}) is $1$, we
have $\al(z)\Phi_2(z)-\bt(z)\Phi_1(z)=1$ for each $z\in\bC$ (see
(\ref{syst})). Hence (\ref{0shi}) holds as well.

The uniqueness of a pair of polynomials $\al$ and $\bt$ follows
from the uniqueness of spectral factorization (see the
Introduction) since $\begin{pmatrix} 1&0\\ \vf&1\end{pmatrix}
\begin{pmatrix} \al&\bt\\ -\wdt{\bt}&\wdt{\al}\end{pmatrix}$ is the
spectral factor of $\begin{pmatrix} 1&0\\ \vf&1\end{pmatrix}
\begin{pmatrix} 1&\vf^*\\ 0&1\end{pmatrix}$. \hfill $\square$

\smallskip

Every process described during the construction of $\al$ and $\bt$
in the proof of Theorem 1 is stable under  small perturbations of
the data, which implies the validity of the following
\begin{corollary}
Let $N\geq 1$, and let $\coprod:\calP_N^-\to\calP^+_N\tm\calP^+_N$
be the map defined according to Theorem $1$ which assigns a pair
of polynomials $\al$ and $\bt$ to each $\vf\in\calP^-_N$ . Then
$\coprod$ is a continuous map.
\end{corollary}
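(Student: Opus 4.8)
The plan is to observe that $\coprod$ is a finite composition of elementary operations, each one extracted from the constructive proof of Theorem 1, and to check that every one of these operations is continuous on the \emph{entire} domain $\calP_N^-$, not merely at a single point. Identifying $\calP_N^-$ with $\bC^N$ via the coefficient vector $(\gm_1,\ldots,\gm_N)$ and $\calP_N^+\tm\calP_N^+$ with $\bC^{2(N+1)}$ via the pair of coefficient vectors $(X,Y)$, continuity of $\coprod$ reduces to ordinary continuity of a map between finite-dimensional spaces.

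First I would note that $\vf\mapsto\Tht$ is linear, hence continuous, since the entries of $\Tht$ are exactly the coefficients $\gm_k$ (padded with zeros). Next, the assignment $\Tht\mapsto X=(\ol{\Tht}\Tht+I_{N+1})^{-1}\mbl$ is continuous: the matrix $\Dt=\ol{\Tht}\Tht+I_{N+1}$ depends polynomially, hence continuously, on the entries of $\Tht$, and because all its eigenvalues are $\geq 1$ it is invertible for \emph{every} $\vf$, so its composition with matrix inversion---which is continuous on the open set of invertible matrices---is continuous. The subsequent map $(\Tht,X)\mapsto Y=\ol{\Tht}\,\ol{X}$ is bilinear, hence continuous as well.

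The two normalizing corrections are the only steps requiring care, because each involves an inverse operation that is continuous only on a suitable region. For the scalar normalization achieving (\ref{Zt}), the constant of Lemma 1 equals $\const=\|X\|^2+\|Y\|^2$ (the $z^0$-coefficients of $\al\wdt{\al}$ and $\bt\wdt{\bt}$), which is continuous in $\vf$ and \emph{strictly positive} everywhere, since $X=\Dt^{-1}\mbl\neq\mbo$ forces $\|X\|>0$; hence $\vf\mapsto 1/\sqrt{\const}$ is continuous and the rescaled coefficients depend continuously on $\vf$. For the unitary correction, the matrix $U$ of (\ref{U}) is assembled from $\al(1),\bt(1),\wdt{\bt}(1),\wdt{\al}(1)$, which after the previous steps are continuous functions of $\vf$; since $\det U=1$ by (\ref{Zt}), $U$ is invertible for every $\vf$ and $U\mapsto U^{-1}$ is continuous, so right multiplication by $U^{-1}$ preserves continuity.

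The hard part is not any single estimate but securing the \emph{uniform} availability of the two inversions: one must confirm that $\ol{\Tht}\Tht+I_{N+1}$ is nonsingular and that $\const>0$ for \emph{all} $\vf\in\calP_N^-$, so that no point of the domain is excluded and each inverse or square-root map is applied only where it is continuous. Both facts are precisely the ones already recorded in the proof of Theorem 1---the eigenvalues of $\Dt$ are bounded below by $1$, and the constant is positive for $\al,\bt\not\equiv0$. Granting these, $\coprod$ is a composition of continuous maps and is therefore continuous, which completes the argument.
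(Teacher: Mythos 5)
Your proof is correct and takes essentially the same approach as the paper, whose entire justification of this corollary is the one-line remark that every step of the construction in the proof of Theorem 1 is stable under small perturbations of the data; you simply make this explicit, isolating exactly the facts the paper also relies on (invertibility of $\ol{\Tht}\Tht+I_{N+1}$ for every $\vf$, positivity of the normalizing constant, and invertibility of the matrix $U$ in (\ref{U})). One terminological nit: the map $(\Tht,X)\mapsto\ol{\Tht}\,\ol{X}$ is conjugate-bilinear rather than bilinear over $\bC$, but it is still continuous, so nothing in the argument is affected.
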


If we take the coefficients of $\vf$ rational, then the proof goes
through without any change and the obtained coefficients of $\al$
and $\bt$ are rational as well. Thus we have the following
\begin{corollary}
If $\vf\in\calP^-_N[\bQ]$, then the corresponding polynomials
$\al$ and $\bt$ are from $\calP^+_N[\bQ]$.
\end{corollary}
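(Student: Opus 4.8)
The plan is to run through the constructive proof of Theorem~1 and check that every operation preserves rationality, the only delicate point being the normalization. First I would note that $\vf\in\calP^-_N[\bQ]$ means $\gm_1,\dots,\gm_N\in\bQ$, so the matrix $\Tht$ has rational entries and, being real, satisfies $\ol\Tht=\Tht$. Therefore $\Dt=\ol\Tht\Tht+I_{N+1}$ is a rational invertible matrix, the solution $X=\Dt^{-1}\mbl$ of $(\ref{dstr})$ lies in $\bQ^{N+1}$, and by $(\ref{Y})$ so does $Y=\ol\Tht\ol X=\Tht X$. Hence the pair $\al_0,\bt_0$ produced \emph{before} normalization already belongs to $\calP^+_N[\bQ]$, and since it satisfies $(\ref{Zmt})$ Lemma~1 gives $\al_0\wdt{\al_0}+\bt_0\wdt{\bt_0}\equiv c$ with
\[
c=\al_0(1)\wdt{\al_0}(1)+\bt_0(1)\wdt{\bt_0}(1)=\al_0(1)^2+\bt_0(1)^2\in\bQ ,
\]
where $c>0$ by the remark after Lemma~1; here I used $\wdt{\al_0}(1)=\al_0(1)$ and $\wdt{\bt_0}(1)=\bt_0(1)$, which hold because the coefficients are real.

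The step I expect to be the main obstacle is the passage from $\al_0,\bt_0$ to the final $\al,\bt$: the normalization achieving $(\ref{Zt})$ divides by $\sqrt c$, and the matrix $U$ of $(\ref{U})$ has entries proportional to $1/\sqrt c$, so each operation is irrational in general. The point I would stress is that the two irrationalities cancel. After normalization $U=\tfrac1{\sqrt c}\left(\begin{smallmatrix}\al_0(1)&\bt_0(1)\\-\bt_0(1)&\al_0(1)\end{smallmatrix}\right)$, whence $U^{-1}=\tfrac1{\sqrt c}\left(\begin{smallmatrix}\al_0(1)&-\bt_0(1)\\\bt_0(1)&\al_0(1)\end{smallmatrix}\right)$, and composing the division by $\sqrt c$ with the right multiplication by $U^{-1}$ amounts to multiplying $\left(\begin{smallmatrix}\al_0&\bt_0\\-\wdt{\bt_0}&\wdt{\al_0}\end{smallmatrix}\right)$ on the right by the single rational matrix
\[
V=\tfrac1{\sqrt c}\,U^{-1}=\frac1c\begin{pmatrix}\al_0(1)&-\bt_0(1)\\\bt_0(1)&\al_0(1)\end{pmatrix},
\]
whose entries are rational because $\al_0(1),\bt_0(1),c\in\bQ$.

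It then remains to read off the answer: the final polynomials are $\al=\big(\al_0(1)\,\al_0+\bt_0(1)\,\bt_0\big)/c$ and $\bt=\big(\al_0(1)\,\bt_0-\bt_0(1)\,\al_0\big)/c$, which are $\bQ$-linear combinations of $\al_0,\bt_0\in\calP^+_N[\bQ]$ and therefore lie in $\calP^+_N[\bQ]$. A short verification closes the argument: $\al(1)=1$ and $\bt(1)=0$ since $\al_0(1)^2+\bt_0(1)^2=c$; the product $\left(\begin{smallmatrix}\al_0&\bt_0\\-\wdt{\bt_0}&\wdt{\al_0}\end{smallmatrix}\right)V$ again has the conjugate-symmetric form of $(\ref{Zmt})$ because $V=\left(\begin{smallmatrix}a&-b\\b&a\end{smallmatrix}\right)$ with $a,b$ real; and $(\ref{Zt})$ persists because $V=\tfrac1{\sqrt c}\,U^{-1}$ is exactly the normalization followed by right multiplication by the unitary $U^{-1}$, the two operations of Theorem~1. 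One could alternatively give a soft proof---$(\al,\bt)$ is the \emph{unique} solution of a polynomial system with rational coefficients, hence fixed by every automorphism of $\bC$ over $\bQ$ and so rational---but the explicit computation above stays closer to Theorem~1 and exhibits $c$ as the common denominator.
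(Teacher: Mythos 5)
Your proof is correct and follows essentially the same route as the paper, whose entire argument for this corollary is the one-line assertion that ``the proof [of Theorem 1] goes through without any change'' when $\vf$ has rational coefficients. In fact your treatment is more careful than the paper's: you correctly identify that the normalization by $\sqrt{c}$ and the right multiplication by $U^{-1}$ are each irrational in isolation, and your explicit computation showing the two factors of $1/\sqrt{c}$ combine into the rational factor $1/c$ is precisely the detail the paper glosses over.
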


\begin{theorem}
Let $N\geq 1$. For any pair of polynomials $\al,\bt\in\calP^+_N$
which satisfy $(\ref{Zt})$ and $(\ref{0shi})$ there exists a
unique $\vf\in\calP^-_N$ such that $(\ref{Zmt})$ holds.
\end{theorem}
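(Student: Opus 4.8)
The plan is to first note that, because $\al,\bt\in\calP^+_N$, the top row of the product in $(\ref{Zmt})$ already lies in $\calP^+$; hence $(\ref{Zmt})$ is equivalent to the system $(\ref{syst})$, i.e.
\[
\Phi_1:=\vf\al-\wdt{\bt}\in\calP^+\quad\text{and}\quad\Phi_2:=\vf\bt+\wdt{\al}\in\calP^+ .
\]
Writing the unknown as $\vf(z)=\sum_{k=1}^N\gm_kz^{-k}$, each membership is the requirement that the coefficients of $z^{-1},\ldots,z^{-N}$ vanish, so the two together amount to $2N$ linear equations in the $N$ unknowns $\gm_1,\ldots,\gm_N$. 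The crux will be that $(\ref{Zt})$ renders this overdetermined system consistent, and in fact that imposing one of the two conditions already forces the other.

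For existence I would use $(\ref{0shi})$ to assume $\al(0)\neq0$, the case $\bt(0)\neq0$ being symmetric. Setting the negative-power part of $\Phi_1$ to zero gives, when read off from the lowest power $z^{-N}$ upward, a triangular linear system whose diagonal entries all equal $\al(0)$; since $\al(0)\neq0$ it has a unique solution, producing a $\vf\in\calP^-_N$ with $\Phi_1\in\calP^+$. To see that this $\vf$ also satisfies the second condition, I would use the identity
\[
\al\Phi_2-\bt\Phi_1=\al\wdt{\al}+\bt\wdt{\bt}=1 ,
\]
which follows from $(\ref{Zt})$ alone, so that $\al\Phi_2=\bt\Phi_1+1\in\calP^+$. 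Because $\Phi_2$ contains no power below $z^{-N}$ and $\al(0)\neq0$, the same triangular bookkeeping (starting from $z^{-N}$) forces every negative coefficient of $\Phi_2$ to vanish, whence $\Phi_2\in\calP^+$. In the symmetric case one solves $\Phi_2\in\calP^+$ first and recovers $\Phi_1\in\calP^+$ from $\bt\Phi_1=\al\Phi_2-1$.

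Uniqueness follows the same template: if $\vf,\vf'$ both satisfy $(\ref{Zmt})$, then $\psi:=\vf-\vf'\in\calP^-_N$ gives $\psi\al=\Phi_1-\Phi_1'\in\calP^+$ and $\psi\bt=\Phi_2-\Phi_2'\in\calP^+$, and the triangular argument applied with whichever of $\al(0),\bt(0)$ is nonzero yields $\psi\equiv0$. The only step that is not purely formal is the passage from one of the two conditions to the other in the existence part; this is precisely where $(\ref{Zt})$, through the identity $\al\Phi_2-\bt\Phi_1=1$, and $(\ref{0shi})$, which keeps the triangular systems nondegenerate, are both indispensable.
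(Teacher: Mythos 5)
Your proof is correct, and it is worth separating its two halves when comparing with the paper. For existence, your construction is the paper's construction in different clothing: solving the triangular system that kills the negative coefficients of $\Phi_1=\vf\al-\wdt{\bt}$ is exactly computing $\vf=[f]^-$ for $f=\wdt{\bt}/\al$, the negative Laurent part at $0$, which is how the paper defines $\vf$ when $\al(0)\neq 0$; and both arguments then use (\ref{Zt}) to get the second membership for free --- you through the determinant identity $\al\Phi_2-\bt\Phi_1=\al\wdt{\al}+\bt\wdt{\bt}=1$ followed by coefficient-wise bookkeeping, the paper through the computation $\vf\bt+\wdt{\al}=\tfrac1{\al}-[f]^+\bt$ and a removable-singularity argument. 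The substance is identical; only the language (linear algebra versus function theory) differs. Where you genuinely diverge is uniqueness: the paper invokes the Bezout theorem to produce $\al_0,\bt_0\in\calP^+$ with $\al\al_0+\bt\bt_0=1$, which yields the closed formula (\ref{vfm}) for $\vf$, whereas you take $\psi=\vf-\vf'$, observe $\psi\al,\,\psi\bt\in\calP^+$, and kill $\psi$ by the same valuation argument (a polynomial nonvanishing at $0$ cannot cancel a nontrivial negative part). Your route is more elementary and self-contained --- it needs no coprimality input, although Bezout does legitimately apply here since (\ref{Zt}) and (\ref{0shi}) force $\al$ and $\bt$ to have no common zeros; what the paper's route buys in exchange is the explicit representation (\ref{vfm}), which it later cites when discussing the generalization of the method to wavelet matrices of rank $m$.
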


The proof of this theorem is also constructive.

\begin{proof}
Define the function $f$ in a deleted neighborhood of $0$ as (see
(\ref{0shi}))
$$
f(z)=\begin{cases} \frac1{\al(z)}\wdt{\bt}(z)\;\;\text{ if
}\;\;\al(0)\not=0\\
-\frac1{\bt(z)}\wdt{\al}(z)\;\;\text{ if }\;\;\bt(0)\not=0
\end{cases}
$$
and let us show that
$$
\vf(z)=[f(z)]^-
$$
satisfies (\ref{Zmt}). Indeed, $\vf\in\calP^-_N$ (as
$[f(z)]^-=[f(z)]^-_N$) and
$$
\vf(z)=f(z)-[f(z)]^+
$$
in a deleted neighborhood of $0$. Consider the case $\al(0)\not=0$
(the case $\bt(0)\not=0$ can be treated analogously). Then
$$
\vf\al-\wdt{\bt}=(f-[f]^+)\al-\wdt{\bt}=\wdt{\bt}-[f]^+\al-\wdt{\bt}=-[f]^+\al
$$
and
$$
\vf\bt+\wdt{\al}=(f-[f]^+)\bt+\wdt{\al}=\frac{\wdt{\bt}\bt+\al\wdt{\al}}{\al}-[f]^+\bt=
\frac1{\al}-[f]^+\bt,
$$
which shows that the functions $\vf\al-\wdt{\bt}$ and
$\vf\bt+\wdt{\al}$ have removable singularities at $0$. Since we
know that these functions are from $\calP$, we conclude that
actually they belong to $\calP^+$. Thus (\ref{syst}) and
consequently (\ref{Zmt}) hold. Observe that
\begin{equation}
 \label{vf}
 \vf(z)=\left[\left[\frac1\al\right]^+_N\wdt{\bt}(z)\right]^-\;\;\text{
 or
 }\;\;\vf(z)=-\left[\left[\frac1\bt\right]^+_N\wdt{\al}(z)\right]^-,
\end{equation}
so that we need to compute the first $N$ coefficients of
$1/\al(z)$ or $1/\bt(z)$ in order to construct $\vf(z)$.

To show the uniqueness of $\vf(z)$ observe that, by virtue of the
Bezout theorem (see e.g. \cite[Theorem 7.6]{Mallat}), there are
$\al_0,\bt_0\in\calP^+$ such that
$\al(z)\al_0(z)+\bt(z)\bt_0(z)=1$. Hence, if $\vf\in\calP^-$
satisfies (\ref{syst}), then
$$
\al_0(z)\big(\vf(z)\al(z)-\wdt{\bt}(z)\big)+\bt_0\big(\vf(z)\bt(z)+\wdt{\al}(z)\big)
\in\calP^+
$$
and
\begin{equation}
 \label{vfm}
\vf(z)=[\al_0(z)\wdt{\bt}(z)-\bt_0(z)\wdt{\al}(z)]^-
 \end{equation}
\end{proof}

As in Theorem 1, the process of construction of $\vf$ (see
(\ref{vf})) is stable under small perturbations of $\al$ and
$\bt$. Thus we come to
\begin{corollary}
Let $N\geq 1$ and $\Om_N\sbs\calP^+_N\tm \calP^+_N$ be the set of
pairs $(\al,\bt)$ which satisfy $(\ref{Zt})$ and $(\ref{0shi})$.
Then the map $\prod:\Om_N\to\calP_N^-$ defined according to
Theorem 2, which assigns $\vf\in\calP^-_N$ to each
$(\al,\bt)\in\Om_N$, is continuous.
\end{corollary}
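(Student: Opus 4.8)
The plan is to turn the informal remark that ``the construction of $\vf$ is stable under small perturbations'' into a genuine continuity argument built on the two explicit formulas (\ref{vf}) from the proof of Theorem 2, each of which is valid on its own open piece of $\Om_N$, glued together by the uniqueness clause of that theorem. First I would decompose $\Om_N$ into the two subsets
\[
U_\al=\{(\al,\bt)\in\Om_N:\al(0)\not=0\},\qquad U_\bt=\{(\al,\bt)\in\Om_N:\bt(0)\not=0\}.
\]
These are open in $\Om_N$ because the evaluation maps $(\al,\bt)\mapsto\al(0)$ and $(\al,\bt)\mapsto\bt(0)$ are continuous (each is just a constant coefficient), and condition (\ref{0shi}) guarantees that $\Om_N=U_\al\cup U_\bt$.

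Next I would establish continuity of $\prod$ on each piece separately. On $U_\al$ the map is given by the first formula in (\ref{vf}), namely $\vf=\big[[1/\al]^+_N\,\wdt{\bt}\big]^-$. Writing $\al(z)=\sum_{k=0}^N x_kz^k$, the coefficients $c_0,c_1,\ldots,c_N$ of $[1/\al]^+_N$ are determined by the recursion $c_0=1/\al(0)$ and $c_k=-\al(0)^{-1}\sum_{j=1}^{k}x_j c_{k-j}$, so each $c_k$ is a rational function of the coefficients of $\al$ whose denominator is a power of $\al(0)$. Hence $[1/\al]^+_N$ depends continuously on $\al$ everywhere on $U_\al$. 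Since multiplication by $\wdt{\bt}$ depends linearly (hence continuously) on $\bt$, and the extraction $[\,\cdot\,]^-$ of the negative-power part is a continuous linear operation, I conclude that $\prod$ is continuous on $U_\al$. The identical argument with the roles of $\al$ and $\bt$ interchanged, using the second formula in (\ref{vf}), handles $U_\bt$.

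Finally I would check that the two prescriptions agree on the overlap $U_\al\cap U_\bt$. There both formulas produce an element of $\calP^-_N$ satisfying (\ref{Zmt}) for the given $(\al,\bt)$, and Theorem 2 asserts that such a $\vf$ is \emph{unique}; hence the two definitions coincide on the overlap. Thus $\prod$ is continuous on each member of an open cover of $\Om_N$ and the local definitions agree on intersections, so by the gluing lemma $\prod$ is continuous on all of $\Om_N$.

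The main obstacle is the one genuinely analytic point, that the truncated reciprocal $[1/\al]^+_N$ varies continuously with $\al$; everything else is formal once uniqueness is invoked to reconcile the two defining formulas. I would therefore concentrate the verification on the coefficient recursion for the $c_k$ and the observation that its denominators, being powers of $\al(0)$, never vanish on $U_\al$, which is exactly what keeps the reciprocal coefficients continuous across $U_\al$ (and symmetrically across $U_\bt$).
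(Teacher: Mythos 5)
Your proof is correct and takes essentially the same route as the paper: the paper's entire justification for this corollary is the remark that the construction of $\vf$ via the explicit formulas (\ref{vf}) is stable under small perturbations of $\al$ and $\bt$, which is exactly what you make rigorous through the coefficient recursion for $[1/\al]^+_N$ (whose denominators are powers of $\al(0)$) together with the open cover $U_\al\cup U_\bt$ and the gluing argument. You simply supply the details the paper leaves implicit, including the reconciliation of the two formulas on the overlap via the uniqueness clause of Theorem 2.
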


We can combine Corollaries 1 and 3 as follows

\begin{corollary} If $\Om^0_N\sbs\Om_n$ is the set of
$(\al,\bt)\in\Om_N$ which in addition satisfy $(\ref{Zt1})$, then
$\coprod$ is a continuous one-to-one map from $\calP^-_n$ onto
$\Om^0_N$ such that $\prod\circ\coprod=\Id_{\calP^-_N}$
\end{corollary}

\begin{proof}
If $\vf\in\calP^-_N$ and $(\al,\bt)\in\Om^0_N$ are such that
(\ref{Zmt}) holds, then $\coprod(\vf)=(\al,\bt)$ and
$\prod(\al,\bt)=\vf$.
\end{proof}

\begin{corollary} For any $(\al,\bt)\in\Om_N$,  let
$\prod(\al,\bt)=\vf$ and $\coprod(\vf)=(\al',\bt')$. Then
\begin{equation}
 \label{rec}
(\al,\bt)=(\al',\bt')U
 \end{equation}
 where the matrix $U$ is defined by the equation $(\ref{U})$.
\end{corollary}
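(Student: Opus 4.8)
The plan is to realize both $(\al,\bt)$ and $(\al',\bt')$ as the first rows of two spectral factors of one and the same positive definite Laurent polynomial matrix, and then to lean on the uniqueness clause of the factorization $(\ref{PSF})$ recalled in the Introduction. First I would set
$$L=\begin{pmatrix}1&0\\ \vf&1\end{pmatrix},\quad V=\begin{pmatrix}\al&\bt\\ -\wdt{\bt}&\wdt{\al}\end{pmatrix},\quad V'=\begin{pmatrix}\al'&\bt'\\ -\wdt{\bt'}&\wdt{\al'}\end{pmatrix},$$
and put $M=LV$, $M'=LV'$. Since $\vf=\prod(\al,\bt)$ makes $(\ref{Zmt})$ hold for $(\al,\bt)$ (Theorem 2), while $(\al',\bt')=\coprod(\vf)$ makes $(\ref{Zmt})$ hold for the \emph{same} $\vf$ (Theorem 1), both $M$ and $M'$ lie in $\calP^+(2\tm 2)$. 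As $(\al,\bt)\in\Om_N$ and $(\al',\bt')\in\Om^0_N$ both satisfy $(\ref{Zt})$, a short computation gives $V\wdt V=(\al\wdt\al+\bt\wdt\bt)I_2=I_2$ and likewise $V'\wdt{V'}=I_2$, whence
$$M\wdt M=L\,V\wdt V\,\wdt L=L\wdt L=L\,V'\wdt{V'}\,\wdt L=M'\wdt{M'}=:S.$$

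Next I would note that $\det L\equiv 1$ and $\det V=\det V'\equiv 1$, so $\det M=\det M'\equiv 1$ never vanishes on $\bD$ and $S$ is positive definite on $\bT$. Thus $M$ and $M'$ are two spectral factors of the same positive definite $S\in\calP_N(2\tm 2)$, and the uniqueness in $(\ref{PSF})$ supplies a constant matrix $W\in\calU(2)$ with $M=M'W$. Because $L$ is invertible, this reduces to $V=V'W$, and comparing first rows yields $(\al,\bt)=(\al',\bt')W$.

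Finally I would identify $W$ with the matrix $U$ of $(\ref{U})$ by evaluating $V=V'W$ at $z=1$. Since $(\al',\bt')\in\Om^0_N$ satisfies $(\ref{Zt1})$, we have $\al'(1)=1$ and $\bt'(1)=0$, and correspondingly $\wdt{\al'}(1)=\ol{\al'(1)}=1$, $\wdt{\bt'}(1)=\ol{\bt'(1)}=0$, so that $V'(1)=I_2$; hence $W=V(1)=U$ and $(\al,\bt)=(\al',\bt')U$. The only genuinely nonroutine step is the invocation of spectral factorization uniqueness: I must make sure the two candidate factors really factor the same $S$ and that each has nonvanishing determinant on $\bD$. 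Once $W=V'^{-1}V$ is known to be a constant unitary matrix, pinning it down by evaluation at $z=1$ is immediate.
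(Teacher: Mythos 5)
Your proposal is correct, but it takes a genuinely different route from the paper's own proof. The paper works in the opposite order: it first normalizes, forming $(\al,\bt)U^{-1}$ explicitly, checks that this pair lies in $\Om^0_N$ (it satisfies $(\ref{Zt})$ and, by the construction of $U$, also $(\ref{Zt1})$) and that $(\ref{Zmt})$ still holds for it with the same $\vf$ (right multiplication by a constant matrix preserves membership in $\calP^+(2\tm 2)$), and then invokes the \emph{uniqueness clause of Theorem 1} as a black box to conclude $(\al,\bt)U^{-1}=\coprod(\vf)=(\al',\bt')$. You instead bypass Theorem 1's uniqueness and go directly to the uniqueness of the matrix spectral factorization $(\ref{PSF})$: both $LV$ and $LV'$ are spectral factors of $S=L\wdt{L}$ (with unit determinant, hence nonvanishing on $\bD$), so $V=V'W$ for a constant $W\in\calU(2)$, which you then pin down as $U$ by evaluating at $z=1$, where $(\ref{Zt1})$ forces $V'(1)=I_2$. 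The two arguments rest on the same engine --- indeed the paper proves Theorem 1's uniqueness precisely by the spectral-factorization argument you reproduce --- so your proof essentially inlines that mechanism rather than citing it. What the paper's version buys is brevity given the already-established Theorem 1; what yours buys is independence from Theorem 1's uniqueness clause, plus the clean observation that the ambiguity constant can always be identified by evaluation at $z=1$. All the verifications in your proof (that $V\wdt{V}=I_2$ follows from $(\ref{Zt})$, that $\det M=\det M'\equiv 1$, and that $L$ cancels) are sound.
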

\begin{proof}
Because of (\ref{Zt}), $U\in \calU(2)$ and $U^{-1}=U^*$. Thus
$$
\begin{pmatrix} \al&\bt\\ -\wdt{\bt}&\wdt{\al}\end{pmatrix}U^{-1}=
\begin{pmatrix} \wdt{\al}(1)\al+\wdt{\bt}(1)\bt&-\bt(1)\al+\al(1)\bt\\
-(\wdt{-\bt(1)\al+\al(1)\bt})&\wdt{\wdt{\al}(1)\al+\wdt{\bt}(1)\bt}\end{pmatrix}
$$
and $
\big(\wdt{\al}(1)\al+\wdt{\bt}(1)\bt,\,\bt(1)\al+\al(1)\bt\big)\in\Om_N^0$.
Since (see (\ref{Zmt}))
$$
\begin{pmatrix} 1&0\\ \vf&1\end{pmatrix}
\begin{pmatrix} \al&\bt\\ -\wdt{\bt}&\wdt{\al}\end{pmatrix}U^{-1}\in\calP^+(2\tm
2),
$$
we have $(\al,\bt)U^{-1}=\coprod(\vf)$. Thus
$(\al,\bt)U^{-1}=(\al',\bt')$ and (\ref{rec}) follows.
\end{proof}

\section{Description of the Method}

Using the coefficients of (\ref{Dwm}), we can define
$$
\al(z)=\frac1{\sqrt{2}}\sum_{k=0}^{N-1}a_kz^k;\;\;
\bt(z)=\frac1{\sqrt{2}}\sum_{k=0}^{N-1}b_kz^k.
$$
As has been mentioned in Section 2, the quadratic condition
(\ref{Dqc}) is equivalent to  the condition for the matrix
$$
\frac1{\sqrt{2}}A(z)=\begin{pmatrix} \al(z)&\bt(z)\\
-z^{N-1}\wdt{\bt}(z)&z^{N-1}\wdt{\al}(z)\end{pmatrix}
$$
to be  paraunitary  (see (\ref{LM}), (\ref{Pc})), and hence
(\ref{Zt}) holds, while (\ref{Dlc}) impies that
$\frac1{\sqrt{2}}A(1)=\frac1{\sqrt{2}}H_2$, where $ H_2$ is the
Haar wavelet matrix of rank $2$ defined by (\ref{H2}).

Since $(\al(z),\bt(z))\in\Om_{N-1}$, we can construct
$\vf=\prod(\al,\bt)$ according to Theorem 2, and
\begin{equation}
 \label{foll}
 (\al,\bt)=\coprod(\vf)\cdot \frac1{\sqrt{2}}H_2\;\Longrightarrow \;
 \left(\sum_{k=0}^{N-1}a_kz^k,\,\sum_{k=0}^{N-1}b_kz^k\right)=\coprod(\vf)\cdot H_2
 \end{equation}
because of Corollary 5. If we approximate $\vf$ by
$\vf_{\bQ}\in\calP^-_{N-1}[\bQ]$, $\vf\approx\vf_{\bQ}$, and
construct $\coprod(\vf_{\bQ})$, then
$\coprod(\vf_{\bQ})\in\Om_{N-1}^0(\bQ)$ by Corollary 2, and
$\coprod(\vf)\approx\coprod(\vf_{\bQ})$ by Corollary 1. Thus the
coefficients of $\coprod(\vf_{\bQ})\cdot H_2$ will be rational and
they will approximate $a_k$ and $b_k$, $k=0,1,\ldots,{N-1}$ (see
\ref{foll}). In this way, we can approximate (\ref{Dwm}) by a
matrix with rational coefficients which satisfy (\ref{Dqc}) and
(\ref{Dlc}) exactly.

The proposed method can be generalized for wavelet matrices
(\ref{wm}) of any rank $m$ since the generalization of the main
result Theorem 1 used in the method is valid for $m$-dimensional
matrices as well (see \cite[Theorem 1]{IEEE}) and at least the
formula (\ref{vfm}) for obtaining $\vf$ can be generalized as well
(see \cite{Proc1998}). However, not for any $m$, there exists a
Haar wavelet matrix of rank $m$ with rational coefficients which
would provide the generalization of formula (\ref{foll}).
Consequently, we can approximate any wavelet matrix $A$ by
$\hat{A}$ with rational coefficients for which equivalent
quadratic conditions (\ref{qc}), (\ref{Pc}) and (\ref{sf1}) hold
exactly, while (\ref{lc}) only approximately. As has been
explained in Section 2, the quadratic condition on a filter bank
is decisive for it to have the perfect reconstruction property.

\section{Computer Simulations and Results}

To construct explicitly the fractions which are close to
coefficients of Daubechies wavelet matrices $D_N$ (see
(\ref{Dwm})\,), a program was written in Mathematica 8.  A
complete screening of all possible options has been performed in
order to select the fractions with minimal denominator in the
given range. On a 2GHz Intel Core 2 Duo system with 2GB RAM
running Ubuntu 11 the calculations took less than a second. As it
was explained in preceding sections, constructed approximate
filter banks $\hat{D}_N$ have the perfect reconstruction property.
The results of different approximate computations of the
coefficients of Daubechies {\em scaling vectors} (the first rows
of $D_N$) for genus $N=2$ and $N=3$ are presented in the tables
below. The $p$th moments of these coefficients,
$M_p=\sum_{k=0}^{2N-1} h_1[k]k^p$ (see (\ref{Dpolrp})), which are
not exactly $0$ anymore because of approximation, are also
computed and located in the table. These tables are presented only
for illustrative purposes and the interested readers can produce
the different rational approximations which might be more suitable
for their specific reasons.

Table 1. $N=2$

{\fontsize{9}{9pt}\selectfont
$$
\begin{array}{| l | r@{.}l | r@{\approx}r@{.}l | r@{\approx}r@{.}l | r@{\approx}r@{.}l | }
\hline
\fbox{\rule[-2mm]{0cm}{6mm}$k=0$}&0&683012701892219&\frac{12}{17 }&0&70588&\frac{3008}{4385 }&0&68597&\frac{192000}{280913}&0&68348\\
\hline
\fbox{\rule[-2mm]{0cm}{6mm}$k=1$} &1&18301270189222&\frac{20}{17 }&1&17647&\frac{5184}{4385 }&1&18221&\frac{332288}{280913}&1&18288\\
\hline
\fbox{\rule[-2mm]{0cm}{6mm}$k=2$} &0&316987298107781&\frac{5}{17 }&0&29411&\frac{1377}{4385 }&0&31402&\frac{88913}{280913}&0&31651\\
\hline
\fbox{\rule[-2mm]{0cm}{6mm}$k=3$} &-0&183012701892219&-\frac{3}{17}&-0&17647&-\frac{799}{4385}&-0&18221&-\frac{51375}{280913}&-0&18288\\
\hline
\fbox{\rule[0mm]{0cm}{2mm}$M_1\approx$}&\multicolumn{2}{c|}{0.0}&\multicolumn{3}{c|}{0.59}&\multicolumn{3}{c|}{0.008}&\multicolumn{3}{c|}{0.001}\\
\hline
\end{array}
$$
}

Table 2. $N=3$

{\fontsize{9}{9pt}\selectfont
$$
\begin{array}{| l | r@{.}l | r@{\approx}r@{.}l | r@{\approx}r@{.}l | r@{\approx}r@{.}l | }
\hline
\fbox{\rule[-2mm]{0cm}{6mm}$k=0$}&0&470467207784164 &\frac{2888}{5249}&0&5502&\frac{2132672}{4439725 }&0&48036&\frac{2677170944}{5703228401}&0&46941\\
\hline
\fbox{\rule[-2mm]{0cm}{6mm}$k=1$} &1&14111691583144&\frac{5944}{5249}&1&1324&\frac{5059904}{4439725}&1&13968&\frac{6509075712}{5703228401}&1&14129\\
\hline
\fbox{\rule[-2mm]{0cm}{6mm}$k=2$} &0&650365000526232&\frac{3104}{5249 }&0&5913&\frac{572096}{887945 }&0&64429&\frac{3712561536}{5703228401}&0&65095\\
\hline
\fbox{\rule[-2mm]{0cm}{6mm}$k=3$} &-0&190934415568327&-\frac{1056}{5249}&-0&2011&-\frac{170688}{887945}&-0&19222&-\frac{1088205184}{5703228401}&-0&19080\\
\hline
\fbox{\rule[-2mm]{0cm}{6mm}$k=4$}&-0&120832208310396&-\frac{743}{5249 }&-0&1415&-\frac{553427}{4439725 }&-0&12465&-\frac{686504079}{5703228401}&-0&12037\\
\hline
\fbox{\rule[-2mm]{0cm}{6mm}$k=5$} &0&0498174997368838&\frac{361}{5249 }&0&0687&\frac{233261}{4439725 }&0&05253&\frac{282357873}{5703228401}&0&04950\\
\hline
\fbox{\rule[0mm]{0cm}{2mm}$M_1\approx$}&\multicolumn{2}{c|}{0.0}&\multicolumn{3}{c|}{0.256}&\multicolumn{3}{c|}{0.0357}&\multicolumn{3}{c|}{-0.0040}\\
\hline
\fbox{\rule[0mm]{0cm}{2mm}$M_2\approx$}&\multicolumn{2}{c|}{0.0}&\multicolumn{3}{c|}{1.622}&\multicolumn{3}{c|}{0.2169}&\multicolumn{3}{c|}{-0.0239}\\
\hline
\end{array}
$$
}

\smallskip

 \vskip+0.2cm

\ Authors' Addresses: \vskip+0.2cm

 L. Ephremidze, E. Lagvilava

\noindent  A. Razmadze mathematical Institute

\noindent I. Javakhishvili State University

\noindent 2, University Street, Tbilisi 0143, Georgia

\noindent E-mail: {\em lephremi@umd.edu; edem@rmi.ge}

\vskip+0.2cm

A. Gamkrelidze

\noindent I. Javakhishvili State University

\noindent  2, University Street, Tbilisi 0143, Georgia

\noindent E-mail: {\em alexander.gamkrelidze@tsu.ge}

\end{document}